\newcommand{\HH}{\mathcal{H}}
\newcommand{\R}{\mathbb{R}}
\newtheorem{defi}{Definition}
\newtheorem{thm}{Theorem}
\newtheorem{prop}{Proposition}
\newtheorem{lem}{Lemma}
\newtheorem{rem}{Remark}
\def\beq{\begin{equation}}
\def\eeq{\end{equation}}
\def\beqs{\begin{equation*}}
\def\eeqs{\end{equation*}}
\begin{document}

\title[A splitting algorithm for fixed points]{A splitting algorithm for fixed points of nonexpansive mappings and equilibrium problems}

\author{Le Dung Muu}
\address{Institute of Mathematics and Applied Sciences\\
Thang Long University\\
Hanoi\\
Vietnam}
\email{ldmuu@math.ac.vn}

\author{Xuan Thanh Le}
\address{Institute of Mathematics\\
Vietnam Academy of Science and Technology\\
Hanoi\\
Vietnam}
\email{lxthanh@math.ac.vn}

\begin{abstract}
We consider the problem of finding a fixed point of a nonexpansive mapping,
which is also a solution of a pseudo-monotone equilibrium problem,
where the bifunction in the equilibrium problem is the sum of two ones.
We propose a splitting algorithm combining the gradient method for equilibrium
problem and the Mann iteration scheme for fixed points of nonexpansive mappings.
At each iteration of the algorithm, two strongly convex subprograms are required
to solve separately, one for each of the component bifunctions.
Our main result states that, under paramonotonicity property of the given bifunction,
the algorithm converges to a solution without any Lipschitz type
condition as well as H\"older continuity of the bifunctions involved.
\end{abstract}

\subjclass{47H05, 47H10, 90C33}

\keywords{Monotone equilibria, fixed point, common solution, splitting algorithm.}

\maketitle


\section{Introduction}

Let $\HH$ be a real Hilbert space endowed with weak topology defined by the inner product
$\langle \cdot , \cdot \rangle$ and its induced norm $\| \cdot \|$. Let $C \subseteq \HH$ be a
nonempty closed convex subset and $f: \HH \times \HH \to \R \cup \{+\infty\}$ a bifunction
such that $f(x, y) < +\infty$ for every $x, y \in C$. The equilibrium problem defined by the
Nikaido-Isoda-Fan inequality that we are going to deal with in this paper is given as
\beqs
\text{Find} \ x \in C \text{ such that } f(x, y) \geq 0 \text{ for all } y \in C. \eqno(EP)
\eeqs
In 1955, Nikaido and Isoda \cite{NI1955} first used this inequality in convex game models.
Then in 1972 Ky Fan \cite{F1972} called this inequality a minimax one and established existence
theorems for $(EP)$. After the appearance of the paper by Blum and Oettli \cite{BO1994},
this problem has been attracted much attention of researchers.
In \cite{BCPP2013, BO1994, MO1992} it has been shown that some important problems such as
optimization, variational inequality, Kakutani fixed point, and Nash equilibria can be formulated
in the form of $(EP)$. Many papers concerning the solution existence, stabilities as well as
algorithms for $(EP)$ have been published (see e.g.
\cite{DMQ2016, HM2011, IS2003, M2003, MQ2009, MQ2015, QAM2012, QMH2008, SS2011}
and the excellent survey paper \cite{BCPP2013}).

Recently the problem of finding a solution of an equilibrium problem
which is also a fixed point of a nonexpansive mapping
has been considered in some papers
(see e.g. \cite{S2012, TT2007a, TT2007b, VSN2015} and the references therein).
The existing methods combine algorithms for solving $(EP)$ such as the projection, extragradient,
and proximal point methods with iterative schemes for finding fixed points of nonexpansive mappings.
These methods require either computing the projection onto the feasible domain $C$, or
solving convex and/or strongly monotone regularized equilibrium subproblems
(see e.g. \cite{A2013, AM2014, HMA2016, LY2009, TT2007a, TT2007b, VSN2015}).
However, in general, solving these subproblems is computational cost.
In order to reduce the computational cost,
several splitting algorithms have been developed for some classes of
maximal monotone operator inclusion, variational inequality,
and equilibrium problems (see e.g. \cite{AH2017, CM2014, DM2016, ES2009,
HV2017, HM2017, M2009, P1979, T2000}).

In this paper we  propose splitting algorithms for finding a point
in the intersection of the fixed point set of a finite number of
nonexpansive mappings and the solution set of an equilibrium problem,
where the bifunction is the sum of two bifucntions.
The algorithm is a combination between the gradient method for
equilibrium problem and the Mann iteration scheme for fixed point of nonexpansive mappings.
The main features of the proposed algorithm are the followings:

 $\bullet$ At each iteration, it requires solving two strongly convex programs,
one for each component bifunction separably rather than for their sum;

 $\bullet$ Evaluating each nonexpansive mapping can be done in parallel;

 $\bullet$ Convergence of the proposed algorithms is ensured without
any Lipschitz type or H\"older conditions that are required in some existing
splitting algorithms for equilibrium problems (e.g. in \cite{AH2017, HV2017}).

The remaining part of the paper is organized as follows. The next section are preliminaries
containing some lemmas that will be used in proving the convergence of  our proposed algorithms.
Section \ref{SectionAlgorithm} is devoted to the formulation of our considered problem,
the description of the proposed algorithm, and its convergence analysis.
Section \ref{SectionApplications} shows some variants of the algorithm
when applying to solve some special cases of the problem.
The last section closes the paper with some conclusions.


\section{Preliminaries}

We recall the following well-known definition on monotonicity of bifunctions (see e.g. \cite{BCPP2013}).
\begin{defi}
A bifunction $f: \HH \times \HH \to \R \cup \{+\infty\}$ is said to be
\begin{itemize}[leftmargin = 0.5 in]
\item[(i)] strongly monotone on $C$ with modulus $\beta > 0$ (shortly $\beta$-strongly monotone) if
  $$f(x, y) + f(y, x) \leq -\beta \| y - x \|^2  \quad \forall x, y \in C;$$

\item[(ii)] monotone on $C$ if
  $$f(x, y) + f(y, x) \leq 0 \quad \forall x, y \in C;$$

\item[(iii)] strongly pseudo-monotone on $C$ with modulus $\beta > 0$
(shortly $\beta$-strongly pseudo-monotone) if
  $$f(x, y) \geq 0 \implies f(y, x) \leq -\beta\| y - x \|^2 \quad \forall x, y \in C;$$

\item[(iv)] pseudo-monotone on $C$ if
  $$f(x, y) \geq 0 \implies f(y, x) \leq 0 \quad \forall x, y \in C;$$

\item[(v)] paramonotone on $C$ with respect to a set $S$ if
 $$x^* \in S, x\in C \text{ and } f(x^*, x) = f(x, x^*) = 0 \text{ implies } x \in S.$$
\end{itemize}
\end{defi}

Obviously, $(i) \implies (ii) \implies (iv)$ and $(i) \implies (iii) \implies (iv)$.
Note that a strongly pseudo-monotone bifunction may not be monotone.
Paramonotone bifunctions have been used in e.g. \cite{AM2014, SS2011, S2011}.
Clearly in the case of optimization problem when
$f(x,y) = \varphi(y) - \varphi(x)$, the bifunction  $f$ is paramonotone on $C$ with
respect to the solution set of the problem $\min_{x\in C} \varphi(x)$.
Conditions for a bifunction $f$ to be paramonotone can be found in \cite{I2003}.

The following well known lemmas will be used for proving the convergence of the algorithm
proposed in the next section.

\begin{lem}\label{lem1}{\em (see \cite{TX1993} Lemma 1)}
Let $\{\alpha_k\}$ and $\{\sigma_k\}$ be two sequences of nonnegative numbers such that
$\alpha_{k+1} \leq \alpha_k + \sigma_k$ for all $k \in \mathbb{N}$,
where $\sum_{k=1}^{\infty} \sigma_k < \infty$. Then the sequence $\{\alpha_k\}$ is convergent.
\end{lem}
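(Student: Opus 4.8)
The plan is to convert the given one-sided recursion into a genuinely monotone sequence by absorbing the tail of the summable perturbation, and then read off convergence of $\{\alpha_k\}$ from it. Since $\sum_{k=1}^{\infty}\sigma_k < \infty$ and all $\sigma_k \geq 0$, the tail sums $s_k := \sum_{j=k}^{\infty}\sigma_j$ are finite for every $k$, they satisfy $s_k \to 0$ as $k \to \infty$, and they obey the recursion $s_k = \sigma_k + s_{k+1}$. My first step is to define the auxiliary sequence $\beta_k := \alpha_k + s_k$ and note that $\beta_k \geq 0$, since both $\alpha_k \geq 0$ and $s_k \geq 0$.

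The key computation is to show that $\{\beta_k\}$ is nonincreasing. Combining the hypothesis $\alpha_{k+1} \leq \alpha_k + \sigma_k$ with the tail recursion gives
\beqs
\beta_{k+1} = \alpha_{k+1} + s_{k+1} \leq \alpha_k + \sigma_k + s_{k+1} = \alpha_k + s_k = \beta_k .
\eeqs
Thus $\{\beta_k\}$ is nonincreasing and bounded below by $0$, so (bounded monotone real sequences converge) it has a limit $\beta \geq 0$. Finally, writing $\alpha_k = \beta_k - s_k$ and using $s_k \to 0$, I conclude $\alpha_k \to \beta$, which is exactly the asserted convergence of $\{\alpha_k\}$.

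In truth there is no serious obstacle here: this is the standard quasi-Fej\'er / Tan--Xu monotonicity lemma, and the argument above is essentially forced. The only point that deserves a moment's care is the legitimacy of the tail sums $s_k$ and the identity $s_k = \sigma_k + s_{k+1}$, both of which rest on the convergence of the nonnegative series $\sum \sigma_k$; once these are justified the telescoping is immediate. An alternative route that avoids introducing $\{\beta_k\}$ is to iterate the inequality from an index $m$ up to $n$, obtaining $\alpha_n \leq \alpha_m + \sum_{k=m}^{n-1}\sigma_k \leq \alpha_m + s_m$, then take $\limsup$ over $n$ and afterwards $\liminf$ over $m$ (using $s_m \to 0$) to force $\limsup_k \alpha_k \leq \liminf_k \alpha_k$ and hence convergence. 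I would nonetheless favor the monotone-sequence argument, as it is shorter and makes the role of the summability hypothesis most transparent.
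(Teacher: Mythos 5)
Your proof is correct and complete. Note that the paper itself gives no proof of this lemma at all: it is stated with a pointer to Lemma~1 of Tan and Xu \cite{TX1993} and used as a black box, so there is no in-paper argument to compare against. Your tail-sum monotonization is the standard way to prove this quasi-Fej\'er fact, and every step checks: the tails $s_k := \sum_{j=k}^{\infty}\sigma_j$ are finite with $s_k \to 0$ precisely because the nonnegative series $\sum_k \sigma_k$ converges, the identity $s_k = \sigma_k + s_{k+1}$ is then legitimate, the auxiliary sequence $\beta_k := \alpha_k + s_k$ is nonnegative and nonincreasing by the hypothesis $\alpha_{k+1} \leq \alpha_k + \sigma_k$, hence convergent, and $\alpha_k = \beta_k - s_k$ inherits the limit since $s_k \to 0$. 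The alternative $\limsup/\liminf$ route you sketch (iterating to get $\alpha_n \leq \alpha_m + s_m$, then taking $\limsup_n$ followed by $m \to \infty$) is equally valid; either argument would serve as a self-contained replacement for the citation, which is more than the paper provides.
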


\begin{lem}\label{lemyen}
Let $\HH$ be a real Hilbert space with the inner product $\langle \cdot , \cdot \rangle$
and its induced norm $\| \cdot \|$. Then for $x, y, z \in \HH$ and $0\leq \gamma \leq 1$, one has
\beqs
\| \gamma x + (1-\gamma)y - z \|^2
= \gamma \|x-z\|^2 + (1-\gamma) \|y-z\|^2 - \gamma(1-\gamma) \|x-y\|^2.
\eeqs
\end{lem}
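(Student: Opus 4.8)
The plan is to prove this purely by expanding the squared norm through the inner product, exploiting the fact that the two coefficients sum to one. First I would observe that, because $\gamma + (1-\gamma) = 1$, the vector inside the norm on the left regroups as $\gamma x + (1-\gamma)y - z = \gamma(x-z) + (1-\gamma)(y-z)$. Introducing the shorthand $u := x - z$ and $v := y - z$ turns the left-hand side into $\|\gamma u + (1-\gamma)v\|^2$, and simultaneously records the key relation $u - v = x - y$, so that the correction term on the right becomes $\gamma(1-\gamma)\|u-v\|^2$.

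Next I would expand both expressions using $\|a\|^2 = \langle a, a\rangle$ together with bilinearity of the inner product. The left-hand side yields $\gamma^2\|u\|^2 + 2\gamma(1-\gamma)\langle u, v\rangle + (1-\gamma)^2\|v\|^2$, while expanding $\|u-v\|^2 = \|u\|^2 - 2\langle u,v\rangle + \|v\|^2$ rewrites the right-hand side as $\gamma\|u\|^2 + (1-\gamma)\|v\|^2 - \gamma(1-\gamma)\bigl(\|u\|^2 - 2\langle u,v\rangle + \|v\|^2\bigr)$. Comparing the two is then just a matter of collecting coefficients.

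The only computation worth naming is this coefficient bookkeeping, and it collapses at once: the coefficient of $\|u\|^2$ on the right is $\gamma - \gamma(1-\gamma) = \gamma^2$, the coefficient of $\|v\|^2$ is $(1-\gamma) - \gamma(1-\gamma) = (1-\gamma)^2$, and the cross term is $2\gamma(1-\gamma)\langle u,v\rangle$, so the two sides agree term by term. Since the statement is a direct algebraic expansion valid in any inner product space, there is no genuine obstacle here; the single point at which the hypothesis $0 \leq \gamma \leq 1$ (more precisely $\gamma + (1-\gamma) = 1$) is used is the opening regrouping $\gamma x + (1-\gamma)y - z = \gamma(x-z)+(1-\gamma)(y-z)$, and the restriction to $[0,1]$ plays no role in the identity itself.
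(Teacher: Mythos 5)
Your proof is correct and follows essentially the same route as the paper: regroup the left-hand side as $\gamma(x-z) + (1-\gamma)(y-z)$ using $\gamma + (1-\gamma) = 1$, expand via bilinearity of the inner product, and match coefficients, with the paper merely transforming the left side into the right rather than expanding both sides and comparing. Your closing remark that the restriction $\gamma \in [0,1]$ is not actually needed for the identity is a correct and worthwhile observation.
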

\begin{proof} By definition of the inner product and its reduced norm we have
\begin{align*}
& \ \| \gamma x + (1-\gamma) y - z \|^2\\
= & \ \| \gamma (x-z) + (1-\gamma) (y-z) \|^2\\
= & \ \gamma^2 \| x - z \|^2 + (1 - \gamma)^2 \| y - z \|^2
+ 2 \gamma (1 - \gamma) \langle x - z, y - z \rangle\\
= & \ \gamma \| x - z \|^2 + (1 - \gamma) \| y - z \|^2\\
& - \gamma (1 - \gamma) \left( \| x - z \|^2 + \| y - z \|^2 - 2 \langle x - z, y - z \rangle \right)\\
= & \ \gamma \| x - z \|^2 + (1 - \gamma) \| y - z \|^2 - \gamma(1 - \gamma) \|(x-z) - (y-z)\|^2\\
= & \ \gamma \| x - z \|^2 + (1 - \gamma) \| y - z \|^2 - \gamma(1 - \gamma) \|x-y\|^2.
\end{align*}
This proves the lemma.
\end{proof}


\section{Problem formulation, algorithm and its convergence}\label{SectionAlgorithm}

\subsection{The problem and its special cases}\label{ProblemStatement}

Let $T: C \to C$ be a nonexpansive mapping, that is
\beqs
\|T(x)-T(y)\| \leq \|x-y\| \ \forall x, y \in C.
\eeqs
The set of all fixed points of the mapping $T$ is denoted by $Fix(T)$.
Let $f: \HH \times \HH \to \mathbb{R}$ be a bifunction.
In what follows we suppose that $f(x,y) = f_1(x,y) + f_2(x,y)$
and that $f_i(x,x) = 0$ ($i=1,2$) for every $x, y \in C$.
The following assumptions will be used in the sequel.
\begin{itemize}[leftmargin = 0.5 in]
\item[(A1)] For each $x \in C$, the functions $f_1(x, \cdot)$ and
$f_2(x, \cdot)$ are convex, subdifferentiable on an open set containing $C$,
while the function $f(\cdot, x)$ is weakly upper semicontinuous on $C$.
\item[(A2)] The bifunction $f$ is pseudo-monotone on $C$.
\item[(A3)] Either $\text{int} C \neq \emptyset$ or,
for every $x\in C$, each function  $f_i(x, \cdot)$ is continuous at a point of $C$.
\end{itemize}
The main problem we are considering in this paper is to find a fixed point of $T$
which is also an equilibrium point of $f$ on $C$. More formally, the problem is stated
as follows.
\beqs
\text{Find} \ x^* \in C \text{ such that } x^*= T(x^*) \text{ and } f(x^*,y) \geq 0 \text{ for all } y \in C. \eqno(P)
\eeqs

Let us mention some typical examples for Problem $(P)$.\\

{\it 1. Equilibrium problem over the set of common fixed points of nonexpansive mappings.}
Let $T_i: C \to C (i = 1, \ldots, m)$ be nonexpansive mappings. Consider the following problem
\begin{align*}
&\text{Find} \ x^* \in C \text{ such that }\\
&x^*= T_i(x^*) \text{ for all $i = 1, \ldots, m$ and } f(x^*,y) \geq 0 \text{ for all } y \in C.
\end{align*}
This problem can be casted into Problem $(P)$, thanks to the following lemma.
\begin{lem} {\rm(see \cite{BC2010} Proposition 4.34).}\label{CommonFixedPoints}
Let $\mu_i > 0$ $(i = 1, \ldots, m)$, $\sum_{i=1}^m  \mu_i = 1$ and $T(x) := \sum_{i=1}^m \mu_1 T_i(x)$
for every $x \in C$. Then $T$ is nonexpansive on $C$ and its fixed point set coincides the intersection
of the fixed point sets of $T_i$ ($i = 1, \ldots ,m$).
\end{lem}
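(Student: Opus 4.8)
The plan is to establish the two assertions of the lemma separately: first that $T$ is nonexpansive and maps $C$ into itself, and then that $Fix(T)=\bigcap_{i=1}^m Fix(T_i)$. The nonexpansiveness is the routine part. For $x,y\in C$ I would write $T(x)-T(y)=\sum_{i=1}^m\mu_i\bigl(T_i(x)-T_i(y)\bigr)$ and apply the triangle inequality together with the nonexpansiveness of each $T_i$ and $\sum_{i=1}^m\mu_i=1$:
\[
\|T(x)-T(y)\|\le\sum_{i=1}^m\mu_i\|T_i(x)-T_i(y)\|\le\sum_{i=1}^m\mu_i\|x-y\|=\|x-y\|.
\]
Since $T(x)$ is a convex combination of the points $T_i(x)\in C$, convexity of $C$ gives $T(x)\in C$, so $T:C\to C$ is well defined and nonexpansive.

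For the fixed point sets, the inclusion $\bigcap_{i=1}^m Fix(T_i)\subseteq Fix(T)$ is immediate: if $T_i(x)=x$ for all $i$, then $T(x)=\sum_{i=1}^m\mu_i x=x$. The reverse inclusion carries all the content. Here I would take $x\in Fix(T)$ and fix a reference point $p\in\bigcap_{i=1}^m Fix(T_i)$ (this uses that the common fixed point set is nonempty, which is guaranteed in our setting because Problem $(P)$ is assumed solvable). Writing $u_i=T_i(x)-p$ and applying the $m$-point extension of Lemma~\ref{lemyen},
\[
\|x-p\|^2=\Bigl\|\sum_{i=1}^m\mu_i u_i\Bigr\|^2=\sum_{i=1}^m\mu_i\|u_i\|^2-\sum_{1\le i<j\le m}\mu_i\mu_j\|u_i-u_j\|^2 .
\]
Because $\|u_i\|=\|T_i(x)-T_i(p)\|\le\|x-p\|$ and $\sum_{i=1}^m\mu_i=1$, the first sum is at most $\|x-p\|^2$; hence the double sum must be $\le 0$, and therefore $=0$. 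Since every $\mu_i>0$, this forces $T_i(x)=T_j(x)$ for all $i,j$. Calling the common value $w$, we get $x=T(x)=\sum_{i=1}^m\mu_i w=w$, so $T_i(x)=x$ for each $i$, i.e. $x\in\bigcap_{i=1}^m Fix(T_i)$.

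The main obstacle is precisely this reverse inclusion. The triangle inequality by itself does not prevent cancellation among the displacements $T_i(x)-p$ — for instance, translations with no common fixed point can average to the identity — so a purely metric argument cannot succeed. What rescues the conclusion is the strict convexity of the Hilbert norm, encoded in the negative cross terms $-\sum_{i<j}\mu_i\mu_j\|u_i-u_j\|^2$ of the identity above, applied at a genuine common fixed point $p$. I would therefore flag that nonemptiness of $\bigcap_{i=1}^m Fix(T_i)$ is the hypothesis doing the essential work, and that the remaining steps are the direct computation with the generalized parallelogram identity rather than any deeper difficulty.
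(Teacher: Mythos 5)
Your proof is correct. A preliminary remark: the paper contains no proof of Lemma \ref{CommonFixedPoints} at all --- it is stated with a citation to \cite{BC2010} (Proposition 4.34) --- so there is no internal argument to compare against; what you have written is essentially a reconstruction of the standard Bauschke--Combettes proof behind that citation. The nonexpansiveness and the inclusion $\bigcap_{i=1}^m Fix(T_i)\subseteq Fix(T)$ are routine, and your reverse inclusion, via the $m$-point extension of Lemma \ref{lemyen},
\begin{equation*}
\Bigl\|\sum_{i=1}^m\mu_i u_i\Bigr\|^2=\sum_{i=1}^m\mu_i\|u_i\|^2-\sum_{1\le i<j\le m}\mu_i\mu_j\|u_i-u_j\|^2,
\end{equation*}
applied to $u_i=T_i(x)-p$ with $p$ a common fixed point, is exactly the strict-convexity mechanism on which the cited result rests. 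Most importantly, you correctly flagged the hypothesis that the paper's statement silently omits: Proposition 4.34 of \cite{BC2010} assumes $\bigcap_{i=1}^m Fix(T_i)\neq\emptyset$, and without it the conclusion fails --- your translation example (e.g. $T_1(x)=x+c$, $T_2(x)=x-c$ on $\mathbb{R}$ with $\mu_1=\mu_2=1/2$, so that $T$ is the identity) shows $Fix(T)$ can be strictly larger. In the paper's usage this is harmless, since $\Omega\neq\emptyset$ is assumed wherever the lemma is invoked, but as stated the lemma should carry the nonemptiness hypothesis, and your proof makes its role explicit. (Minor point: the paper's statement contains the typo $\sum_{i=1}^m\mu_1 T_i(x)$ for $\sum_{i=1}^m\mu_i T_i(x)$, which your reading silently corrects.)
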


{\it 2. Equilibrium problem over the intersection of closed convex sets.} Consider the problem
\beqs
\text{Find} \ x \in C := \bigcap_{i=1}^m C_i \text{ such that } f(x,y) \geq 0 \ \text{ for all } y \in C, \eqno(P_1)
\eeqs
where $C_i (i = 1, \ldots, m)$ are closed convex sets. In this case, we can take
$T_i(x) := P_{C_i}(x)$ for each $i = 1, \ldots, m$ (i.e., $T_i$ is the projection map on $C_i$),
and take $T(x) := \sum_{i = 1}^m \mu_i T_i(x)$ with  $0 < \mu_i < 1$ for every $i$,
 $\sum_{i=1}^m \mu_i = 1$. Then by Lemma \ref{CommonFixedPoints} we have $Fix(T) \equiv C$,
 and therefore Problem $(P_1)$ can be formulated in form of Problem $(P)$.\\

{\it 3. Common solution of equilibrium problem and maximal monotone operator inclusion}. Consider the problem
\begin{align}
&\text{Find } x \in C \text{ such that } \notag\\
&f(x,y) \geq 0 \text{ for all } y \in C \text{ and } 0 \in M_i(x) \text{ for all }  i = 1, \ldots, m, \tag{$P_2$}
\end{align}
where $M_i (i=1, \ldots, m)$  is maximal monotone multi-valued operators on $\HH$.
It is well-known (see e.g. \cite{R1976}) that the operator $T_i:= (M_i+ cI)^{-1}$ with $c >0$ is
defined everywhere, single-valued, nonexpansive on the whole space and
its fixed point set coincides with the solution set of the inclusion $0 \in M_i(x)$.
Thus Problem $(P_2)$ can be reformulated as $(P)$.\\

{\it 4. Split equilibrium problem}. The split feasibility problem
introduced in \cite{CE1994} is given as
\beqs
\text{Find} \ x \in U \text{ such that } Ax \in V, \eqno(SFP)
\eeqs
where $U$ and $V$ are respectively nonempty closed convex subsets of Hilbert spaces $\HH_1$ and $\HH_2$,
and $A$ is a bounded linear operator from $\HH_1$ to $\HH_2$. Let us consider this problem with
$U$ being the solution set of equilibrium problem $(EP)$ and the inclusion $Ax \in V$ being represented by
the solution set of the system of inequalities
\beqs
\langle a^i, x \rangle \leq b_i \quad (i = 1, \ldots, m).
\eeqs
In this setting, Problem $(SFP)$ can be written as
\begin{align}
&\text{Find} \ x \in C \text{ such that }\notag\\
&f(x,y) \geq 0 \text{ for all } y \in C \text{ and } \langle a^i, x \rangle \leq b_i \ (i = 1, \ldots, m). \tag{$SEP$}
\end{align}
Let $H_i$ be the half space $\{x \in \HH \ | \ \langle a^i, x \rangle - b_i \leq 0$.
Then Problem $(SEP)$ can take the form of Problem $(P)$ with
$T(x):= \sum_{i=1}^m \mu_i P_{H_i}(x)$, where $P_{H_i}$ is the projection operator onto the half space $H_i$,
and $\mu_i (i = 1, \ldots, m)$ are positive real numbers such that $\sum_{i=1}^m \mu_i = 1$.


\subsection{The algorithm and its convergence analysis}

The algorithm below is a combination between the grandient one for pseudo-monotone
equilibrium problem $(EP)$ and the Mann iterative scheme for finding fixed points of
the nonexpansive mapping $T$. The stepsize is computed as in the algorithm for
equilibrium problem in \cite{SS2011}.

\begin{algorithm}[H]
\caption{A splitting algorithm for solving $(P)$.} \label{AlgorithmSplit}
\begin{algorithmic}
\State \textbf{Initialization:} Seek $x^0\in C$.
Choose $\gamma \in (0,1)$ and a sequence $\{\beta_k\}_{k \geq 0} \subset \mathbb{R}$
satisfying the following conditions
\beqs
\quad \sum_{k=0}^\infty \beta_k = +\infty, \quad \sum_{k=0}^\infty \beta_k^2 < +\infty.
\eeqs
\State \textbf{Iteration} $k = 0, 1, \ldots$:
\State \qquad Take $g_1^k \in \partial_2 f_1(x^k, x^k), g_2^k \in \partial_2 f_2(x^k, x^k)$.
\State \qquad Compute
\begin{align}\label{Mann}
\eta_k &:= \max\{\beta_k, \|g_1^k\|, \|g_2^k\|\}, \ \lambda_k := \dfrac{\beta_k}{\eta_k},\notag\\
y^k &:= \arg\min\{\lambda_k f_1(x^k, y) + \dfrac{1}{2}\|y - x^k\|^2 \mid y \in C\},\notag\\
z^k &:= \arg\min\{\lambda_k f_2(x^k, y) +\dfrac{1}{2}\|y - y^k\|^2 \mid y \in C\},\notag\\
x^{k+1} &:= \gamma z^k + (1-\gamma)T(x^k).
\end{align}
\end{algorithmic}
\end{algorithm}

In order to prove the convergence of Algorithm \ref{AlgorithmSplit}, we need the auxiliary
results in the following propostions. For that we denote by $\Omega$ the solution set of
Problem $(P)$ and assume that $\Omega \neq \emptyset$.

\begin{prop}\label{DistanceConvergence}
 For each $x^* \in \Omega$, the sequence $\{\|x^k-x^*\|\}_{k \in \mathbb{N}}$ is convergent.
\end{prop}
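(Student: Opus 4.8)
The plan is to produce a recursion of the form $\alpha_{k+1} \le \alpha_k + \sigma_k$ for $\alpha_k := \|x^k - x^*\|^2$, with $\sum_k \sigma_k < \infty$, and then invoke Lemma \ref{lem1}. The summable remainder will turn out to be proportional to $\beta_k^2$, which is summable by the standing assumption on the stepsize, so the whole argument reduces to tracking error terms and showing they are of order $\beta_k^2$.

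First I would exploit the strong convexity of the two subproblems. By (A1) the functions $f_1(x^k,\cdot)$ and $f_2(x^k,\cdot)$ are convex, so each objective $\lambda_k f_i(x^k,\cdot) + \tfrac12\|\cdot - \cdot\|^2$ is $1$-strongly convex; hence its minimizer $y^k$ (resp.\ $z^k$) satisfies, for every $y \in C$,
\[
\lambda_k f_1(x^k,y) + \tfrac12\|y-x^k\|^2 \ge \lambda_k f_1(x^k,y^k) + \tfrac12\|y^k-x^k\|^2 + \tfrac12\|y-y^k\|^2,
\]
and the analogous inequality holds for $z^k$ with $f_2$ and the anchor $y^k$. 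Setting $y = x^*$ in both, adding, and cancelling $\tfrac12\|x^*-y^k\|^2$, I obtain
\[
\tfrac12\|z^k-x^*\|^2 \le \tfrac12\|x^k-x^*\|^2 - \tfrac12\|y^k-x^k\|^2 - \tfrac12\|z^k-y^k\|^2 + \lambda_k B,
\]
where $B := f_1(x^k,x^*) + f_2(x^k,x^*) - f_1(x^k,y^k) - f_2(x^k,z^k)$.

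The decisive step is controlling $B$. Since $x^* \in \Omega$ we have $f(x^*,x^k) \ge 0$, and pseudo-monotonicity (A2) gives $f(x^k,x^*) = f_1(x^k,x^*) + f_2(x^k,x^*) \le 0$. For the remaining two terms I would use the subgradients $g_i^k \in \partial_2 f_i(x^k,x^k)$ together with $f_i(x^k,x^k)=0$, which yield $f_1(x^k,y^k) \ge \langle g_1^k, y^k - x^k\rangle$ and $f_2(x^k,z^k) \ge \langle g_2^k, z^k - x^k\rangle$; hence $B \le \|g_1^k\|\,\|y^k-x^k\| + \|g_2^k\|\,\|z^k-x^k\|$. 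The purpose of the stepsize rule is precisely that $\lambda_k\|g_i^k\| = \beta_k\|g_i^k\|/\eta_k \le \beta_k$, so after the triangle inequality $\|z^k-x^k\| \le \|y^k-x^k\| + \|z^k-y^k\|$ and completing the square via $-\tfrac12 t^2 + ct \le \tfrac12 c^2$, the two negative quadratic distances absorb the linear terms and leave a remainder of order $\beta_k^2$. This delivers the key estimate $\|z^k-x^*\|^2 \le \|x^k-x^*\|^2 + C\beta_k^2$ for an absolute constant $C$.

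Finally I would pass from $z^k$ to $x^{k+1} = \gamma z^k + (1-\gamma)T(x^k)$ using Lemma \ref{lemyen} with $x=z^k$, $y=T(x^k)$, $z=x^*$, together with nonexpansiveness of $T$ and $T(x^*)=x^*$, so that $\|T(x^k)-x^*\| \le \|x^k-x^*\|$. The cross term $-\gamma(1-\gamma)\|z^k-T(x^k)\|^2$ is nonpositive and can simply be discarded, giving $\|x^{k+1}-x^*\|^2 \le \|x^k-x^*\|^2 + C\gamma\beta_k^2$. Taking $\alpha_k = \|x^k-x^*\|^2$ and $\sigma_k = C\gamma\beta_k^2$, the summability $\sum_k \beta_k^2 < \infty$ lets Lemma \ref{lem1} conclude that $\{\|x^k-x^*\|^2\}$, hence $\{\|x^k-x^*\|\}$, converges. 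I expect the main obstacle to be the bookkeeping in the control of $B$: verifying that the two linear error terms are genuinely dominated by the two negative squared distances, which is exactly where the specific choice $\lambda_k = \beta_k/\eta_k$ with $\eta_k = \max\{\beta_k,\|g_1^k\|,\|g_2^k\|\}$ is essential.
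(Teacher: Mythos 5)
Your proposal is correct and follows essentially the same route as the paper's proof: strong convexity of the two subproblems evaluated at $x^*$, pseudo-monotonicity to discard $f(x^k,x^*)$, the stepsize bound $\lambda_k\|g_i^k\| \le \beta_k$ from the subgradient inequalities, completion of squares, Lemma \ref{lemyen} with nonexpansiveness for the Mann step, and Lemma \ref{lem1} to conclude. If anything, your explicit use of the triangle inequality $\|z^k-x^k\| \le \|y^k-x^k\| + \|z^k-y^k\|$ before completing the square is slightly more careful than the paper's display (\ref{ct8}), whose claimed identity silently matches the negative term $-\|z^k-y^k\|^2$ against the linear term $2\beta_k\|z^k-x^k\|$; your resulting constant $C\beta_k^2$ in place of $2\beta_k^2$ is harmless, since summability of $\{\beta_k^2\}$ is all the argument needs.
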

\begin{proof}
To simplify the notations, for each $k \geq 0$ let
\begin{align*}
h_1^k (x) &:= \lambda_k f_1(x^k,x) + \frac{1}{2}\| x-x^k \|^2,\\
h_2^k (x) &:= \lambda_k f_2(x^k,x) + \frac{1}{2}\| x-y^k \|^2.
\end{align*}
By Assumption (A1), the function $h_1^k$ is strongly convex with modulus $1$ and
sub-differentiable, which implies
\beq \label{ct1}
h_1^k(y^k) + \langle u_1^k,x-y^k \rangle + \frac{1}{2}\| x-y^k \|^2 \leq h_1^k (x) \quad \forall x \in C
\eeq
for any $u_1^k \in \partial h_1^k (y^k)$. As defined in Algorithm
\ref{AlgorithmSplit}, $y^k$ is a minimizer of $h_1^k(\cdot)$ over $C$.
Therefore, by Assumption (A3) and the optimality condition for convex programming, we have
\beqs
0 \in \partial h_1^k (y^k) + N_C (y^k),
\eeqs
which implies that there exists $u_1^k \in - \partial h_1^k (y^k)$ such that
$ \langle u_1^k,x-y^k \rangle \geq 0$ for all $x \in C$. Hence, for each $x \in C$,
it follows from (\ref{ct1}) that
$$ h_1^k(y^k) + \frac{1}{2} \| x - y^k \|^2 \leq h_1^k(x),$$
i.e.,
$$\lambda_k f_1(x^k, y^k) + \frac{1}{2}\| y^k - x^k \|^2 + \dfrac{1}{2}\|x - y^k\|^2
\leq \lambda_k f_1(x^k, x) + \frac{1}{2}\| x - x^k \|^2,$$
or equivalently,
\begin{equation} \label{ct2}
\|y^k - x\|^2 \leq \|x^k - x\|^2 +2\lambda_k \left( f_1(x^k, x)-f_1(x^k, y^k) \right) - \|y^k - x^k\|^2.
\end{equation}
By the same argument on $h_2^k(\cdot)$ and $z^k$, we have
\begin{equation} \label{ct3}
\|z^k-x\|^2 \leq \|y^k-x\|^2 +2\lambda_k\left(f_2(x^k,x)-f_2(x^k,z^k)\right) - \|z^k-y^k\|^2.
\end{equation}
Combining (\ref{ct2}) and (\ref{ct3}) yields
\begin{align}\label{ct4}
\|z^k-x\|^2 \leq \|x^k-x\|^2 & + 2\lambda_k f(x^k,x) - \|y^k-x^k\|^2 - \|z^k-y^k\|^2\notag\\
& - 2\lambda_k\left(f_1(x^k,y^k) + f_2(x^k,z^k)\right).
\end{align}
Since $g_1^k \in \partial_2f_1(x^k,x^k)$ and $f_1(x^k,x^k) = 0$, we have
\beqs
f_1(x^k,y^k) = f_1(x^k,y^k) - f_1(x^k,x^k) \geq \langle g_1^k, y^k - x^k \rangle,
\eeqs
which implies
\beq\label{ct5}
-2 \lambda_k f_1(x^k,y^k) \leq - 2 \lambda_k \langle g_1^k, y^k - x^k \rangle.
\eeq
By Cauchy-Schwarz inequality and the fact that $\|g_1^k\| \leq \eta_k$, from (\ref{ct5}) we have
\beq \label{ct6}
-2 \lambda_k f_1(x^k,y^k) \leq 2 \dfrac{\beta_k}{\eta_k}\eta_k\|y^k - x^k\| = 2\beta_k\|y^k-x^k\|.
\eeq
By the same argument, we obtain
\beq \label{ct7}
-2 \lambda_k f_2(x^k,z^k) \leq 2 \beta_k\|z^k-x^k\|.
\eeq
Replacing (\ref{ct6}) and (\ref{ct7}) to (\ref{ct4}) we get
\begin{align} \label{ct8}
\|z^k-x\|^2
&\leq \|x^k-x\|^2 + 2 \lambda_k f(x^k,x)\notag\\
& - \|y^k-x^k\|^2 - \|z^k - y^k\|^2 + 2\beta_k \left(\|y^k-x^k\| + \|z^k-x^k\|\right)\notag\\
&= \|x^k-x\|^2 + 2 \lambda_k f(x^k,x)\notag\\
& + 2\beta_k^2 - \left(\|y^k-x^k\| - \beta_k\right)^2 - \left(\|z^k-x^k\| - \beta_k\right)^2\notag\\
&\leq \|x^k-x\|^2 + 2 \lambda_k f(x^k,x) + 2\beta_k^2.
\end{align}
Taking $x = x^* \in \Omega \subseteq C$ in (\ref{ct8}) we get
\beq\label{ct9}
\|z^k-x^*\|^2 \leq \|x^k-x^*\|^2 + 2 \lambda_k f(x^k,x^*) + 2\beta_k^2.
\eeq
Furthermore, since $x^{k+1} = \gamma z^k + (1-\gamma) T(x^k)$ as defined
in Algorithm \ref{AlgorithmSplit}, we have
\begin{align}\label{ct10}
\|x^{k+1} - x^*\|^2 &= \|\gamma z^k + (1-\gamma) T(x^k) - x^*\|^2 \notag\\
&= \gamma \|z^k-x^*\|^2 + (1-\gamma)\|T(x^k)-T(x^*)\|^2\notag\\
&\ \ \ - \gamma(1-\gamma)\|z^k - T(x^k)\|^2\notag\\
&\leq \gamma\|z^k-x^*\|^2 + (1-\gamma)\|x^k-x^*\|^2 - \gamma(1-\gamma)\|z^k - T(x^k)\|^2\notag\\
&\leq \gamma\left(\|x^k-x^*\|^2 + 2 \lambda_k f(x^k,x^*) + 2\beta_k^2\right) + (1-\gamma)\|x^k-x^*\|^2\notag\\
&\ \ \ - \gamma(1-\gamma)\|z^k - T(x^k)\|^2\notag\\
&= \|x^k-x^*\|^2 + 2 \gamma \lambda_k f(x^k,x^*) + 2\gamma\beta_k^2\notag\\
&\ \ \ - \gamma(1-\gamma)\|z^k - T(x^k)\|^2.
\end{align}
Here, the second equality follows from Lemma \ref{lemyen} and the fact that $T(x^*) = x^*$,
the first inequality is due to the non-expansiveness of the mapping $T$,
the second inequality is a consequence of (\ref{ct9}),
while the last equality is trivial.
Now we note that $f(x^*, x^k) \geq 0$ since $x^*$ belongs to the solution set of $(P)$.
This implies that $f(x^k, x^*) \leq 0$ by pseudo-monotonicity of the bifunction $f$ on $C$.
From (\ref{ct10}), by the negativity of $f(x^k, x^*)$ and due to $\gamma \in (0, 1)$, we obtain
\beq\label{ct11}
\|x^{k+1} - x^*\|^2 \leq \|x^k-x^*\|^2 + 2\gamma\beta_k^2.
\eeq
Since $\gamma > 0$ and $\sum_{k = 1}^{\infty} \beta_k^2 < \infty$,
in virtue of Lemma \ref{lem1}, the inequality (\ref{ct11}) implies
that the sequence $\{\|x^k - x^*\|\}_{k \in \mathbb{N}}$ is convergent.
This closes the proof of the proposition.
\end{proof}

\begin{prop}\label{WeakCluster}
 Any weakly cluster point of $\{x^k\}_{k \in \mathbb{N}}$ is a fixed point of $T$.
\end{prop}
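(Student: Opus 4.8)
The plan is to prove that the iterates are \emph{asymptotically regular} with respect to $T$, that is $\|x^k - T(x^k)\| \to 0$, and then to invoke the demiclosedness of $I-T$ at the origin. Note first that $\{x^k\}\subset C$: indeed $x^0\in C$, each of $y^k,z^k$ is a minimizer over $C$, $T(x^k)\in C$, and $x^{k+1}$ is a convex combination of $z^k$ and $T(x^k)$, so convexity keeps it in $C$. Since $C$ is closed and convex it is weakly closed, so any weak cluster point $\bar x$ of $\{x^k\}$ lies in $C$ and $T(\bar x)$ is well defined.

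First I would extract the estimate $\|z^k-T(x^k)\|\to 0$ from the computations already in the proof of Proposition~\ref{DistanceConvergence}. Recall that (\ref{ct10}) together with $\lambda_k\ge 0$ and the pseudo-monotonicity inequality $f(x^k,x^*)\le 0$ gives, for a fixed $x^*\in\Omega$,
\beqs
\gamma(1-\gamma)\|z^k - T(x^k)\|^2 \le \left(\|x^k-x^*\|^2 - \|x^{k+1}-x^*\|^2\right) + 2\gamma\beta_k^2 .
\eeqs
Summing over $k$ and telescoping, the right-hand side is bounded by $\|x^0-x^*\|^2 + 2\gamma\sum_k\beta_k^2 < \infty$, because $\{\|x^k-x^*\|\}$ converges by Proposition~\ref{DistanceConvergence} and $\sum_k\beta_k^2<\infty$. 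Hence $\sum_k\|z^k-T(x^k)\|^2<\infty$, so $\|z^k-T(x^k)\|\to 0$.

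Next I would recover $\|z^k-x^k\|\to 0$ by retaining the negative terms that were discarded in passing from the first line of (\ref{ct8}) to (\ref{ct9}). Starting again from (\ref{ct4}) and the bounds (\ref{ct6})--(\ref{ct7}), I would estimate the cross-term via $\|z^k-x^k\|\le\|z^k-y^k\|+\|y^k-x^k\|$ and then complete the squares, obtaining a refinement of (\ref{ct10}) of the form
\beqs
\|x^{k+1}-x^*\|^2 \le \|x^k-x^*\|^2 + 2\gamma\lambda_k f(x^k,x^*) + c\,\gamma\beta_k^2 - \gamma\left(\|y^k-x^k\|-2\beta_k\right)^2 - \gamma\left(\|z^k-y^k\|-\beta_k\right)^2
\eeqs
for a fixed constant $c>0$. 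Using $f(x^k,x^*)\le 0$ and summing as before shows that both $\sum_k(\|y^k-x^k\|-2\beta_k)^2$ and $\sum_k(\|z^k-y^k\|-\beta_k)^2$ are finite; since $\beta_k\to 0$ this yields $\|y^k-x^k\|\to 0$ and $\|z^k-y^k\|\to 0$, whence $\|z^k-x^k\|\to 0$. Combining with the previous step, $\|x^k-T(x^k)\|\le\|x^k-z^k\|+\|z^k-T(x^k)\|\to 0$.

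Finally, let $\bar x$ be any weak cluster point, say $x^{k_j}\rightharpoonup\bar x$ with $\bar x\in C$. Since $T$ is nonexpansive and $x^{k_j}-T(x^{k_j})\to 0$, the demiclosedness principle for nonexpansive mappings (see e.g. \cite{BC2010}) gives $\bar x = T(\bar x)$, i.e. $\bar x\in Fix(T)$. I expect the main obstacle to be the second step: as written, (\ref{ct8}) mixes the quadratic term $\|z^k-y^k\|^2$ with a linear term in $\|z^k-x^k\|$, so the negative quadratics cannot be kept verbatim; the care is in splitting the cross-term and completing the squares so that only summable quantities and a telescoping difference survive. Everything else is a routine telescoping argument together with the classical demiclosedness principle.
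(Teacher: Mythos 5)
Your proposal is correct, and its overall architecture (asymptotic regularity $\|x^k-T(x^k)\|\to 0$ plus the demiclosedness principle) is exactly that of the paper; the step $\|z^k-T(x^k)\|\to 0$ is obtained from the same inequality derived from (\ref{ct10}), whether one telescopes and sums as you do or, as the paper does, simply notes that the right-hand side tends to zero because $\{\|x^k-x^*\|\}$ converges and $\beta_k\to 0$. Where you genuinely diverge is the step $\|z^k-x^k\|\to 0$: you retain the discarded negative terms, split the cross-term $2\beta_k\|z^k-x^k\|\le 2\beta_k\|z^k-y^k\|+2\beta_k\|y^k-x^k\|$, and complete squares to get a refinement of (\ref{ct10}) whose summation yields $\|y^k-x^k\|\to 0$ and $\|z^k-y^k\|\to 0$ separately; this is valid (your unspecified constant works out to $c=5$), but it misses the paper's much slicker trick: since (\ref{ct8}) holds for \emph{every} $x\in C$ and $x^k\in C$, one may substitute $x=x^k$ and use $f(x^k,x^k)=0$ to get the one-line quantitative bound $\|z^k-x^k\|^2\le 2\beta_k^2$, whence $\|z^k-x^k\|\le\sqrt{2}\,\beta_k\to 0$ immediately. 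Your heavier route does buy slightly more information (separate convergence of $\|y^k-x^k\|$ and $\|z^k-y^k\|$, and square-summability of $\|z^k-T(x^k)\|$), and your explicit appeals to the demiclosedness principle and to the fact that the iterates remain in the weakly closed set $C$ make rigorous two points the paper passes over with ``the proposition follows immediately from this claim''; but for the statement as posed, the substitution $x=x^k$ renders most of your second step unnecessary.
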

\begin{proof}
In the following we will show that $\|T(x^k) - x^k\| \to 0$ as $k \to \infty$.
The proposition follows immediately from this claim.

Indeed, by taking $x = x^k$ in (\ref{ct8}) and note that $f(x^k, x^k) = 0$, we obtain
\beqs
\|z^k - x^k\|^2 \le 2 \beta_k^2,
\eeqs
which implies
\beq\label{ct12}
\lim_{k \to \infty} \|z^k - x^k\| = 0,
\eeq
since $\beta_k \to 0$ as $k \to \infty$. On the other hand, let $x^* \in \Omega$ be fixed, then
  $f(x^k,x^*) \leq 0$.
Therefore, from (\ref{ct10}) we have
\begin{align*}
\gamma(1-\gamma)\|T(x^k)-z^k\|^2
&\leq \|x^k-x^*\|^2 - \|x^{k+1}-x^*\|^2 + 2 \gamma \lambda_k f(x^k,x^*) + 2\gamma\beta_k^2\\
&\leq \|x^k-x^*\|^2 - \|x^{k+1}-x^*\|^2 + 2\gamma\beta_k^2,
\end{align*}
which implies
\beq\label{ct13}
\lim_{k \to \infty}\|T(x^k)-z^k\| = 0,
\eeq
since $\{\|x^k-x^*\|\}$ is convergent, $\gamma \in (0, 1)$, and $\beta_k \to 0$ as $k \to \infty$.
To the end, by (\ref{ct12}) and (\ref{ct13}), we obtain
\beqs
\|T(x^k)-x^k\| \leq \|T(x^k) - z^k\| + \|z^k - x^k\| \to 0 \ \text{as} \ k \to \infty.
\eeqs
This closes the proof of the proposition.
$\square$
\end{proof}

We now establish  the convergence  result in the following theorem.

\begin{thm} \label{thm1}
 Suppose that $f$ is paramonotone on $C$ with respect to the solution set
 $Sol(C,f)$ of problem $(EP)$. Then under the assumptions \emph{(A1)}, \emph{(A2)}, \emph{(A3)},
 the sequence $\{x^k\}_{k \in \mathbb{N}}$ generated by Algorithm \ref{AlgorithmSplit}
 converges weakly to a solution of $(P)$, provided that $(P)$ admits a solution.
\end{thm}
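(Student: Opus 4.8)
The plan is to place the proof in the Fej\'er--Opial framework for weakly convergent iterations. Proposition~\ref{DistanceConvergence} already tells us that $\lim_{k}\|x^k-x^*\|$ exists for every $x^*\in\Omega$, so in particular $\{x^k\}$ is bounded and, $\HH$ being a Hilbert space, admits weak cluster points; Proposition~\ref{WeakCluster} guarantees that each such cluster point lies in $Fix(T)$. Note also that $C$ is weakly closed, being closed and convex, so every weak cluster point belongs to $C$. What remains is therefore (a) to show that at least one---and in fact every---weak cluster point solves $(EP)$, and (b) to invoke Opial's lemma to force a unique weak limit.

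First I would extract summable information about the equilibrium residual. Fix $x^*\in\Omega$. Since $x^*\in Sol(C,f)$ gives $f(x^*,x^k)\ge0$, pseudo-monotonicity (A2) yields $f(x^k,x^*)\le0$; rearranging (\ref{ct10}) while discarding the nonpositive term $-\gamma(1-\gamma)\|z^k-T(x^k)\|^2$ produces $2\gamma\lambda_k\bigl(-f(x^k,x^*)\bigr)\le \|x^k-x^*\|^2-\|x^{k+1}-x^*\|^2+2\gamma\beta_k^2$. Summing over $k$ and using $\sum_k\beta_k^2<\infty$ gives $\sum_k\lambda_k\bigl(-f(x^k,x^*)\bigr)<\infty$. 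To turn this into a vanishing subsequence I need $\sum_k\lambda_k=+\infty$: since $\{x^k\}$ is bounded and each $f_i(x^k,\cdot)$ is convex and subdifferentiable on an open set containing $C$, the diagonal subgradients $g_i^k$ stay bounded along the bounded trajectory, so $\eta_k=\max\{\beta_k,\|g_1^k\|,\|g_2^k\|\}\le M$ for some $M$, whence $\lambda_k=\beta_k/\eta_k\ge \beta_k/M$ and $\sum_k\lambda_k\ge M^{-1}\sum_k\beta_k=+\infty$. Consequently $\liminf_k\bigl(-f(x^k,x^*)\bigr)=0$, and I may select a subsequence with $f(x^{k_j},x^*)\to0$ that, after a further extraction, converges weakly to some $\bar x$.

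Next I would identify $\bar x$ as a solution of $(EP)$, which is where paramonotonicity enters. By the weak upper semicontinuity of $f(\cdot,x^*)$ in (A1) together with $f(x^{k_j},x^*)\to0$ we obtain $f(\bar x,x^*)\ge\limsup_j f(x^{k_j},x^*)=0$. On the other hand $x^*\in Sol(C,f)$ gives $f(x^*,\bar x)\ge0$, and pseudo-monotonicity then forces $f(\bar x,x^*)\le0$; hence $f(\bar x,x^*)=0$, and a second application of pseudo-monotonicity (now to $f(\bar x,x^*)\ge0$) gives $f(x^*,\bar x)\le0$, so $f(x^*,\bar x)=0$ as well. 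Paramonotonicity with respect to $Sol(C,f)$ now yields $\bar x\in Sol(C,f)$, and combined with $\bar x\in Fix(T)$ from Proposition~\ref{WeakCluster} we conclude $\bar x\in\Omega$.

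Finally, to promote subsequential convergence to weak convergence of the whole sequence I would apply Opial's lemma: since $\lim_k\|x^k-x^*\|$ exists for every $x^*\in\Omega$ by Proposition~\ref{DistanceConvergence}, it suffices to show that \emph{every} weak cluster point lies in $\Omega$; then two cluster points $\bar x,\hat x\in\Omega$ both make $\{\|x^k-\bar x\|\}$ and $\{\|x^k-\hat x\|\}$ convergent, and expanding $\|x^k-\bar x\|^2-\|x^k-\hat x\|^2$ and evaluating the resulting limit along the two subsequences forces $\bar x=\hat x$. I expect the main obstacle to be precisely this upgrade from \emph{some} to \emph{every} cluster point: the residual $f(x^k,x^*)$ is controlled only in the $\lambda_k$-weighted averaged sense above, so the extraction of the previous step is intrinsically sub-sequential, and a weakly convergent subsequence chosen without regard to the size of $f(x^k,x^*)$ need not inherit $f(\cdot,x^*)\to0$. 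Overcoming this is exactly where paramonotonicity must be used globally---repeating the identification argument relative to the already-found solution $\bar x\in\Omega$, for which $\lim_k\|x^k-\bar x\|$ exists, $f(x^k,\bar x)\le0$, and $\lambda_k\bigl(-f(x^k,\bar x)\bigr)\to0$---to exclude any cluster point outside $\Omega$. The remaining verifications, namely the weak closedness of $C$, the boundedness of the diagonal subgradients, and the algebraic Opial identity, are routine.
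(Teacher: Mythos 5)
Your argument coincides with the paper's proof in all of its substantive steps: fix $x^*\in\Omega$, use pseudo-monotonicity to get $f(x^k,x^*)\le 0$, sum the key estimate (\ref{ct10}) to obtain $\sum_k \lambda_k\bigl(-f(x^k,x^*)\bigr)<\infty$ (the paper's (\ref{ct14})), bound $\eta_k\le M$ to conclude $\sum_k\lambda_k=+\infty$ (the paper's (\ref{ct15}); the paper cites Proposition 4.1 of \cite{VSN2015} for the boundedness of the diagonal subgradients, whereas you argue it directly from convexity along the bounded trajectory, which is the same standard fact), extract a subsequence realizing $\limsup_k f(x^k,x^*)=0$, and then run the identical chain of weak upper semicontinuity, two applications of pseudo-monotonicity giving $f(\bar x,x^*)=f(x^*,\bar x)=0$, paramonotonicity giving $\bar x\in Sol(C,f)$, and Proposition \ref{WeakCluster} giving $\bar x\in Fix(T)$, hence $\bar x\in\Omega$. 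Up to this point your proposal is correct and is essentially the paper's proof.

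The divergence is in the final step, and there your sketch contains a step that would fail at exactly the obstacle you yourself flag. Your plan requires that \emph{every} weak cluster point lie in $\Omega$, to be obtained by ``repeating the identification argument relative to $\bar x$'' using $\lambda_k\bigl(-f(x^k,\bar x)\bigr)\to 0$. But $\lambda_k=\beta_k/\eta_k$ is in general not bounded away from zero (only $\beta_k/M\le\lambda_k\le 1$ is available, and $\beta_k\to 0$, e.g.\ when the subgradient norms stay of order one), so $\lambda_k\bigl(-f(x^k,\bar x)\bigr)\to 0$ gives no pointwise control on $f(x^k,\bar x)$: along an arbitrary subsequence $x^{m_j}\rightharpoonup\hat x$ one only knows $f(x^{m_j},\bar x)\le 0$, weak upper semicontinuity yields $f(\hat x,\bar x)\ge\limsup_j f(x^{m_j},\bar x)$, whose right-hand side may well be negative, and the identification breaks down --- the same subsequential limitation you described, not removed by switching the anchor from $x^*$ to $\bar x$. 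The paper does not attempt this stronger claim: having produced the single cluster point $\bar x\in\Omega$, it applies Proposition \ref{DistanceConvergence} at $\bar x$ (so $\lim_k\|x^k-\bar x\|$ exists) and concludes whole-sequence weak convergence directly from this together with $x^{k_j}\rightharpoonup\bar x$; the Opial-type cross-term computation you describe then settles uniqueness only among cluster points already known to lie in $\Omega$. So the fix is to drop the unsupported ``exclude any cluster point outside $\Omega$'' step and close the proof as the paper does; as written, your last paragraph promises a verification that the method you propose cannot deliver.
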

\begin{proof}
Let $x^*$ be in the solution set $\Omega$ of $(P)$. As obtained in the
proof of Proposition \ref{DistanceConvergence}, from (\ref{ct10})
and the negativity of $f(x^k, x^*)$ we have
\begin{align*}
0 &\leq - 2 \gamma \lambda_k f(x^k, x^*)\\
&\leq \|x^k - x^*\|^2 - \|x^{k+1} - x^*\|^2 + 2 \gamma\beta_k^2 - \gamma(1-\gamma)\|z^k - T(x^k)\|^2\\
&\leq \|x^k - x^*\|^2 - \|x^{k+1} - x^*\|^2 + 2 \gamma \beta_k^2
\end{align*}
for every $k \in \mathbb{N}$, which implies  that
\beq\label{ct14}
0 \leq - 2 \gamma \displaystyle \sum_{k = 0}^\infty \lambda_k f(x^k, x^*)
\leq \|x^0 - x^*\|^2 + 2 \displaystyle \sum_{k = 0}^\infty \beta_k^2 < +\infty,
\eeq
since $\sum_{k = 0}^{\infty} \beta_k^2 < +\infty$. On the other hand,
note that the sequences $\{g_1^k\}_{k \in \mathbb{N}}$ and $\{g_2^k\}_{k \in \mathbb{N}}$
are bounded by Proposition 4.1 \cite{VSN2015}. This fact, together with the construction
of $\{\beta_k\}_{k \in \mathbb{N}}$, implies that
there exists $M > 0$ such that $\|g_1^k\| \leq M, \|g_2^k\| \leq M, \beta_k \leq M$
for all $k \in \mathbb{N}$. Hence, for each $k \in \mathbb{N}$ we have
\beqs
\eta_k = \max\left\{\beta_k, \|g_1^k\|, \|g_2^k\|\right\} \leq M,
\eeqs
which implies
\beqs
\lambda_k = \dfrac{\beta_k}{\eta_k} \geq \dfrac{\beta_k}{M}.
\eeqs
Since $\sum_{k = 0}^{\infty} \beta_k = +\infty$, it follows that
\beq\label{ct15}
\displaystyle \sum_{k=0}^{\infty} \lambda_k = +\infty.
\eeq
The combination of (\ref{ct14}) and (\ref{ct15}) implies that
\beqs
\limsup \{f(x^k, x^*) \ | \ k \in \mathbb{N}\} = 0.
\eeqs
Let $\{x^{k_j}\}_{j \in \mathbb{N}}$ be a subsequence of $\{x^{k}\}_{k \in \mathbb{N}}$ such that
\beqs
\lim_{j \to +\infty} f(x^{k_j},x^*) = \limsup f(x^k,x^*) = 0.
\eeqs
By Proposition \ref{DistanceConvergence}, the sequence $\{\|x^k-x^*\|\}_{k \in \mathbb{N}}$
is convergent. It follows that the sequence $\{x^k\}_{k \in \mathbb{N}}$ is bounded,
and hence its subsequence $\{x^{k_j}\}_{j \in \mathbb{N}}$ is also bounded.
We may therefore assume that $\{x^{k_j}\}_{j \in \mathbb{N}}$ weakly converges
to some $\bar{x} \in C$. Since $f(\cdot,x^*)$ is weakly upper semicontinuous, we have
\beq\label{ct16}
f(\bar{x}, x^*) \geq \lim_{j \to +\infty} f(x^{k_j},x^*) = 0,
\eeq
and as a consequence, $f(x^*, \bar{x}) \leq 0$ by pseudo-monotonicity of the bifunction $f$.
On the other hand, $f(x^*, \bar{x}) \geq 0$ since $x^*$ belongs to the solution set $\Omega$ of $(P)$.
Therefore we obtain
\beq\label{ct17}
f(x^*, \bar{x}) = 0.
\eeq
This implies $f(\bar{x}, x^*) \leq 0$ by pseudo-monotonicity of $f$. Together with (\ref{ct16}),
it follows that
\beq\label{ct18}
f(\bar{x}, x^*) = 0.
\eeq
Since $\bar{x} \in C, x^* \in \Omega \subset Sol(C, f)$, and $f$ is paramonotone on $C$
with respect to $Sol(C, f)$, from (\ref{ct17}) and (\ref{ct18}) we have $\bar{x} \in Sol(C, f)$.
Furthermore, since $\bar{x}$ is a weakly cluster point of $\{x^k\}_{k \in \mathbb{N}}$,
by Proposition \ref{WeakCluster}, $\bar{x}$ is a fixed point of $T$.
Hence $\bar{x} \in Sol(C, f) \cap Fix(T) = \Omega$. It therefore follows from
Proposition \ref{DistanceConvergence} that the sequence
$\left\{\|x^k - \bar{x}\|\right\}_{k \in \mathbb{N}}$ converges.
Note that $\left\{x^{k_j}\right\}_{j \in \mathbb{N}}$ weakly converges to $\bar{x}$,
we can conclude that the whole sequence $\left\{x^k\right\}_{k \in \mathbb{N}}$ weakly
converges to $\bar{x}$, which is a solution to $(P)$.
\end{proof}

\begin{rem}
When $\mathcal{H}$ is a finite dimensional space, Assumption (A3) can be omitted (see e.g. \cite{Tuy2016} page 70).
\end{rem}


\section{Applications}\label{SectionApplications}

In this section, we apply Algorithm \ref{AlgorithmSplit} to some special cases of
Problem $(P)$ mentioned in Section \ref{ProblemStatement}.
For equilibrium problem over the intersection of closed convex sets $(P_1)$,
by taking $T_i(x) := P_{C_i}(x)$ for each $i = 1, \ldots, m$, the computation of
$x^{k+1}$ in (\ref{Mann}) takes the form
\beqs
x^{k+1} := \gamma z^k + (1-\gamma) \sum_{i=1}^m \mu_i P_{C_i}(x^k).
\eeqs
So in this case, we obtain a splitting algorithm for Problem $(P_1)$,
where optimization problems are solved separately for each function $f_1$ and $f_2$,
while the projection is computed in parallel onto each convex set $C_i$
rather than onto their intersection.

Similarly, applying Algorithm \ref{AlgorithmSplit} to find a
common solution of equilibrium problem and maximal monotone operator inclusion
(Problem $(P_2)$), the iterate $x^{k+1}$ is computed separately for each resovent operator
by taking
\beqs
x^{k+1} := \gamma z^k + (1-\gamma) \sum_{i=1}^m \mu_i (M_i + cI)^{-1} (x^k).
\eeqs

To illustrate the proposed algorithm for split equilibrium problem $(SEP)$,
let us consider a game with $n$-players.
Each player $i = 1, \ldots, n$ can take an individual action,
which is represented by $x_i \in \mathbb{R}$.
All players together can take a collective action $x = (x_1, \ldots, x_n) \in \mathbb{R}^n$.
Each player $i$ uses a payoff function $f_i$ which depends on actions of other players.
The Nikaido-Isoda function of the game is defined as
\beqs\label{NikaidoIsoda}
f(x,y) :=  \sum_{i=1}^n \left(f_i(x)- f_i(x[y_i])\right),
\eeqs
where the vector $x[y_i]$ is obtained from $x$ by replacing component $x_i$ by $y_i$.
Let $C_i \subset \mathbb{R}$ be the strategy set of player $i$,
then the strategy set of the game is $C := C_1 \times ...\times C_n$.
As usual,  a point $x^* \in C$ is said to be a Nash equilibrium point of the game if
\beqs
f_i(x^*) = \max _{y_i \in C_i} f_i(x^*[y_i]) \quad \forall i = 1, \ldots, n.
\eeqs
It is well known that $x^*$ is an equilibrium point if and only if
$f(x^*,y) \geq 0$ for all $y \in C$.
A concrete practical equilibrium model, where the bifunction is a paramonotone
one being the sum of two monotone functions can be found in \cite{QMH2008}.
In some practical games such as jointly constrained Nash-Cournot equilibrium models,
the equilibrium points are required to satisfy additional constraints given by
\beqs
\langle a^j, x \rangle \leq b_j \quad (j = 1, \ldots, m).
\eeqs
Such the game has exactly the form of Problem $(SEP)$.
For this problem, the computation of iterate point $x^{k+1}$ in (\ref{Mann})
of Algorithm \ref{AlgorithmSplit} takes the form
\beqs
x^{k+1} = \mu z^k + (1-\mu)\sum_{j=1}^m P_{H_j}(x^k),
\eeqs
where $H_j$ is the half space defined by the inequality $\langle a^j, x \rangle - b_j \leq 0$,
and therefore the projection $P_{H_j}$ has a closed form.

\section{Conclusion}

We have proposed a splitting algorithm for finding a point in the intersection of
solution set of a pseudo-monotone equilibrium problem and the fixed point set
of a nonexpansive mapping. The bifunction involved in the equilibrium problem
is the sum of the two ones. Exploiting this special structure, the proposed
splitting algorithm requires solving two strongly convex subprograms separately for each component bifunction.
  Combining with the Mann iteration scheme, the algorithm converges
under the paramonotonicity property of the involved bifunction.
Some variants of the algorithm devoted to some special cases of the considered problem
have been shown.

\section*{Acknowledgements}

This work is supported by National Foundation for Science and Technology Development
(NAFOSTED) of Vietnam under grant number 101.01-2017.315.

\end{document}